\newenvironment{myabstract}{\par\noindent
{\bf Abstract . } \small }
{\par\vskip8pt minus3pt\rm}
\newcounter{item}[section]
\newcounter{kirshr}
\newcounter{kirsha}
\newcounter{kirshb}
\newenvironment{enumarab}{\setcounter{kirshb}{1}
\begin{list}{(\arabic{kirshb})}{\usecounter{kirshb}} }{\end{list}}
\newtheorem{theorem}{Theorem}[section]
\newtheorem{corollary}[theorem]{Corollary}
\theoremstyle{definition}
\newtheorem{definition}[theorem]{Definition}
\def\R{\mathbb{R}}
\def\A{{\mathfrak{A}}}
\def\B{{\mathfrak{B}}}
\def\K{{\bf K}}
\def\K{{\bf K}}
\def\Rd{{\ Rd}}
\def\(R)RA{{\bf (R)RA}}
\def\R{\mathbb{R}}
\def\B{{\sf B}}
\def\w{{\sf w}}
\def\y{{\sf y}}
\def\g{{\sf g}}
\def\r{{\sf r}}
\def\K{{\sf K}}
\def\R{\sf R}
\def\set#1{\{#1\} }
\def\A{{\mathfrak{A}}}
\def\B{{\mathfrak{B}}}
\def\A{{\mathfrak{A}}}
\def\B{{\mathfrak{B}}}
\def\Rd{{\mathfrak{Rd}}}
\def\R{\mathfrak{R}}
\def\pa{$\forall$}
\def\pe{$\exists$}
\def\set#1{ \{#1\}}
\def\pe{$\exists$}
\def\pa{$\forall$}
\def\w{{\sf w}}
\def\g{{\sf g}}
\def\y{{\sf y}}
\def\r{{\sf r}}
\def\ws{winning strategy}
\def\y{{\sf y}}
\def\g{{\sf g}}
\def\r{{\sf r}}
\def\w{{\sf w}}
\title{There is no finite variable universal axiomatization for any class $\K$ between diagonal free cylindric algebras and 
polyadic equality algebras of finite dimension $\geq 3$}
\author{Tarek Sayed Ahmed\\
Department of Mathematics, Faculty of Science,\\ 
Cairo University, Giza, Egypt.
  }
\begin{document}
\maketitle
\begin{myabstract} Using a rainbow construction for polyadic algebras, lifting a rainbow construction for relation algebras by Hirsch and Hodkinson, 
we prove the result in the title.
For each finite $n\geq 3$ we construct two finite rainbow polyadic algebras, such that for one \pa\ has a \ws\ so 
its diagonal free reduct is not representable, and for the other algebra \pe\ has a \ws\ 
so the algebra is representable. The algebras are based, as usual in cylindric rainbow constructions, on  
coloured graphs, labelled by the rainbow colours,  except that the representable algebra has more reds. 
All other colours are the same. Finally, we show that $n$ variable equations cannot 
see that one is representable (as a polyadic algebra) 
while the  diagonal free reduct of the other is not.

\end{myabstract}

\begin{theorem} Let $V$ be a discriminator variety. Assume that there are simple algebras $\A$ and $\B$ such that $\A\in V$ and $\B\notin V$, 
and for any equation involving $n$ variables, 
$\A$ falsifies $e$ if and only if $\B$ falsifies $e$.
Then $V$ is not axiomatizable by any set of prenex universal sentences that uses only $n$ variables.
\end{theorem}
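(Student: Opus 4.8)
The plan is to argue by contraposition. Suppose, for contradiction, that $V=\mathrm{Mod}(\Sigma)$ for some set $\Sigma$ of prenex universal sentences each using at most $n$ variables; after padding with dummy variables we may write every $\sigma\in\Sigma$ as $\sigma=\forall x_1\cdots\forall x_n\,\psi_\sigma(x_1,\dots,x_n)$ with $\psi_\sigma$ quantifier-free. Since $\A\in V$ we have $\A\models\Sigma$, while $\B\notin V$ yields some $\sigma=\forall\bar x\,\psi(\bar x)\in\Sigma$ with $\B\not\models\sigma$; fix this $\sigma$ together with a tuple $\bar b$ from $\B$ witnessing $\B\models\neg\psi(\bar b)$. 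The whole argument then consists in compiling the quantifier-free matrix $\psi$ into equations in the same variables and playing the behaviour of $\A$ against that of $\B$.

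First I would set up the key reduction lemma. Let $t$ be a discriminator term for $V$; on every simple member of $V$ it realizes the ternary discriminator function, and---crucially for the application---the same term realizes the discriminator on the simple algebra $\B$ as well (in the intended cylindric-type setting $t$ is a Boolean combination of operations whose behaviour on simple algebras is forced, e.g.\ $c_0\cdots c_{n-1}$, so being a genuine discriminator follows from simplicity alone, not from membership in $V$). Over any such simple algebra the discriminator lets one compile Boolean combinations of equations into equations: the single equation $t(s_1,s_2,r_1)=t(s_1,s_2,r_2)$ is equivalent to the implication $s_1=s_2\rightarrow r_1=r_2$, and together with the switching term derived from $t$ one obtains single-equation encodings of conjunction and disjunction of equations. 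Putting $\psi$ in conjunctive normal form and encoding each clause, I obtain finitely many equations $e_1,\dots,e_m$ with $\psi\leftrightarrow\bigwedge_{i\le m}e_i$ valid in every simple algebra on which $t$ discriminates, in particular in both $\A$ and $\B$. The essential bookkeeping point is that these gadgets only substitute the given subterms into $t$ and never introduce a fresh variable, so each $e_i$ uses only the variables $x_1,\dots,x_n$.

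I would then close the loop. On $\A$ the validity $\A\models\forall\bar x\,\psi$ transfers through the equivalence to $\A\models\forall\bar x\,e_i$ for every $i$, so $\A$ falsifies none of $e_1,\dots,e_m$. On $\B$ the witness $\bar b$ satisfies $\neg\psi(\bar b)$, so by the same equivalence $\bigwedge_{i\le m}e_i(\bar b)$ fails; hence $e_{i_0}(\bar b)$ fails for some $i_0\le m$, that is, $\B$ falsifies $e_{i_0}$. But $e_{i_0}$ is an equation in at most $n$ variables that $\B$ falsifies while $\A$ does not, contradicting the hypothesis that $\A$ and $\B$ falsify exactly the same $n$-variable equations. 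This contradiction shows that no such $\Sigma$ exists.

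The main obstacle is the reduction lemma, and specifically the two features that make it bite here: that the discriminator compilation preserves the variable count (so the equations stay within the $n$ variables that the agreement hypothesis controls), and that the compilation is sound on $\B$ even though $\B\notin V$. The first is handled by checking that the implication, conjunction and disjunction gadgets are pure substitutions into $t$; the second rests on the observation that in this signature the discriminator behaviour is a consequence of simplicity rather than of lying in $V$, which is exactly why the hypothesis only asks that $\A$ and $\B$ be simple.
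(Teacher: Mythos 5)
Your proof is correct and takes essentially the same route as the paper's: simplicity gives subdirect irreducibility, the discriminator term compiles each prenex universal sentence into equations in the same variables, and the $n$-variable equational agreement of $\A$ and $\B$ then transfers to $n$-variable universal sentences. You are in fact more explicit than the paper on the one delicate point---that the compilation must remain sound on $\B$ even though $\B\notin V$, which the paper's phrase ``equivalent in subdirectly irreducible members'' quietly elides---and your observation that the discriminator behaviour of $t$ on $\B$ follows from simplicity in the ambient signature, rather than from membership in $V$, is exactly what is needed to close that gap.
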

\begin{proof} If $\Sigma$ is any $n$ variable equational theory then $\A$ and $\B$ either both validate 
$\Sigma$ or neither do. Since one algebra is in $V$ while the other is not, it follows that $\Sigma$ does not axiomatize $V$. 
If $\A$ and $\B$ are simple, then they
are subdirectly irreducible. In a dicriminator variety evey universal prenex formula is equivalent in subdirectly irreducible members to an 
equation using the same number of vaiabales. Hence the desired.
\end{proof}

For every $n\geq 3$, two  finite rainbow polyadic algebras are constructed, one will be representable, the diagonal free reduct 
of the other will not be representable. All colours are the same 
except that one has more red graphs than the other (a red graph is a coloured graph such that at least one edge is coloured red).
In the usual atomic game on graphs using his excess of greens, \pa\ wins. 
This prohibits the first algebra to be representable. In the second case the reds are more, and \pe\ can win the 
$\omega$ rounded game (in finitely many rounds) on coloured graphs, hence the algebra will be representable.
Futhermore, $n$ variable (diagonal free) equations cannot distinguish the two algebras. 

This is the general idea, now for the details:

Let $\kappa$ be a finite number $>n$. 

Let $\alpha=3.2^n$ and $\beta=(\alpha+1)(\alpha+2)/2.$

\begin{definition}
The colours we use:
\begin{itemize}

\item greens: $\g_i$ ($1\leq i<n-2)\cup \{\g^0_i: i\leq \alpha+2\}$,
\item whites : $\w_i, i <n$
\item yellow : $\y$
\item reds:  $\r_{i}$, $i\in \kappa$

\item shades of yellow : $\y_S: S\subseteq \alpha+2$

And coloured graphs are:
\begin{definition}
\begin{enumarab}

\item $M$ is a complete graph.

\item $M$ contains no triangles (called forbidden triples)
of the following types:

\vspace{-.2in}
\begin{eqnarray}
&&\nonumber\\
%(1', x, y)&&\mbox{unless }x=y\label{forb:id}\\
(\g, \g^{'}, \g^{*}), (\g_i, \g_{i}, \w),
&&\mbox{any }i\in n-1\;  \\
%(\g^j_0, \y, \w_i)&&\\
(\g^j_0, \g^k_0, \w_0)&&\mbox{ any } j, k\in \alpha+2\\
\label{forb:pim}(\g^i_0, \g^j_0, \r_{kl})&&\\
%\label{forb:pim2}(\g_i, \g_j, \r_{kl})&&\mbox{if } i=j\mbox{ but }k\neq l\\
%\label{forb: black}(\y,\y,\y), \\
\label{forb:match}(\r_{i}, \r_{i}, \r_{j})&&
\end{eqnarray}
and no other triple of atoms is forbidden.
%Let $\A$ be the complex algebra
%over $\alpha$ (so the domain of $\A$ consists of arbitrary sets of atoms).
%\end{definition}

\item If $a_0,\ldots   a_{n-2}\in M$ are distinct, and no edge $(a_i, a_j)$ $i<j<n$
is coloured green, then the sequence $(a_0, \ldots a_{n-2})$
is coloured a unique shade of yellow.
No other $(n-1)$ tuples are coloured shades of yellow.

\item If $D=\set{d_0,\ldots  d_{n-2}, \delta}\subseteq M$ and
$\Gamma\upharpoonright D$ is an $i$ cone with apex $\delta$, inducing the order
$d_0,\ldots  d_{n-2}$ on its base, and the tuple
$(d_0,\ldots d_{n-2})$ is coloured by a unique shade
$y_S$ then $i\in S.$

\end{enumarab}
\end{definition}

\end{itemize}
\end{definition}
The set of coloured graphs (based on the above colours) are defined the usual way. Denote this class by 
$\K$. The atoms of the finite algebra are coloured graphs; however, we consider also the polyadic operations defined on the atom structures
consisting of equivaelence classes $[f]$ where $f: n\to \Gamma\in \K$, easily by 
$[f]_{ij}[g]$ iff $f=g\circ [i,j]$. This is well defined.
The algebras wil be finite because $n$ is finite and the colours are finite.

$\A=\A_{\alpha+2,\beta}$ and $\B=\A_{\alpha+2,\alpha}$, here $\alpha+2$ is the number of greens

\begin{definition}
The  game builds a nested sequence $M_0\subseteq M_1\subseteq \ldots $.
of coloured graphs.  
The game is $\omega$ rounded, but of course because our algebras are finite, the game
wil terminate in finitely many rounds.
\pa\ picks a graph $M_0\in \K$ with  
$|M_0|=n$. \pe\ makes no response
to this move. In a subsequent round, let the last graph built be $M_i$.
$\forall$ picks 
\begin{itemize}
\item a graph $\Phi\in \K$ with $|\Phi|=n$
\item a single node $k\in \Phi$
\item a coloured garph embedding $\theta:\Phi\sim \{k\}\to M_i$
Let $F=\phi\smallsetminus \{k\}$. Then $F$ is called a face. 
\pe\ must respond by amalgamating
$M_i$ and $\Phi$ with the embedding $\theta$. In other words she has to define a 
graph $M_{i+1}\in C$ and embeddings $\lambda:M_i\to M_{i+1}$
$\mu:\phi \to M_{i+1}$, such that $\lambda\circ \theta=\mu\upharpoonright F.$
\end{itemize} 
\end{definition}

\begin{theorem} \pa\ has a \ws\  for $\B$  in $\alpha+2$ rounds; hence $\Rd_{df}B\notin {\sf RDf_n}$
\end{theorem}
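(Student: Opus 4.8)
The plan is to give \pa\ an explicit strategy in the amalgamation game on $\K$ that drives \pe\ into an edge she cannot colour, and then to read off non-representability from his win. First I would have \pa\ fix a base $\{d_0,\dots,d_{n-2}\}$ of $n-1$ nodes, coloured by some shade $\y_S$, and use his opening graph $M_0$ (on $n$ nodes) to plant this base together with one apex $\delta_0$ sitting over it as a green cone. In every later round he replays the same base (recovered by the embedding $\theta$) with exactly one new node $\delta_k$, chosen so that $\delta_k$ is the apex of a cone over the base whose green colour has not been used before; the cone condition then forces the shade $S$ of the base to acquire the corresponding index, which is harmless as long as these indices stay inside $\alpha+2$. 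Carrying this out for $\alpha+2$ rounds presents \pe\ with $\alpha+2$ pairwise distinct apexes $\delta_0,\dots,\delta_{\alpha+1}$, all coning over the one base.

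The pressure now falls on the edges between apexes. For $j<k$ the triangle on $\delta_j,\delta_k$ and any base node carries two green cone-edges, so by the ban on green triangles together with the green--white forbidden triples the edge $\delta_j\delta_k$ can be neither green nor white, and must therefore be red. Hence \pe\ is forced to build a complete red graph $K_{\alpha+2}$ on the apexes. At the round introducing the last apex $\delta_{\alpha+1}$ she must red-colour its $\alpha+1$ edges to the earlier apexes; but any two of these, say $\delta_{\alpha+1}\delta_i$ and $\delta_{\alpha+1}\delta_j$, lie in the all-red triangle $\delta_{\alpha+1},\delta_i,\delta_j$, so by the red forbidden triple $(\r_i,\r_i,\r_j)$ they must carry distinct reds. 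Since $\B=\A_{\alpha+2,\alpha}$ supplies only the $\alpha$ reds $\r_0,\dots,\r_{\alpha-1}$, the pigeonhole principle makes two of these $\alpha+1$ incident edges share a red, producing a forbidden triple. Thus \pe\ cannot amalgamate in round $\alpha+2$, so \pa\ wins in $\alpha+2$ rounds. (The slack is real: the sister algebra $\A$ carries $\beta=(\alpha+1)(\alpha+2)/2$ reds, exactly enough to give every edge of $K_{\alpha+2}$ its own colour, which is why \pe\ survives there.)

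To finish I would convert the win into the stated conclusion: by the correspondence underlying the lifted rainbow construction, any representation of $\Rd_{df}\B$ would hand \pe\ a \ws\ in this game, so \pa's win yields $\Rd_{df}\B\notin {\sf RDf_n}$. The main obstacle I expect is to certify that \pa's strategy is a legal strategy in the \emph{diagonal free} game and that \pe\ genuinely has no escape. The first half is handled by noting that the cone moves are pure cylindrifier steps using no diagonals, while the final contradiction is a purely red counting phenomenon untouched by the diagonal operations, so it defeats even a diagonal free representation. The delicate half is checking that at the fatal round every colour \pe\ might attempt on the offending edge is blocked by one of the forbidden triples, and that the cone condition really does compel the base shade to absorb all the green indices rather than letting her dodge with a different shade; the whole argument stands or falls on there being no consistent completion once the $\alpha$ reds are exhausted.
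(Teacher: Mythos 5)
Your game strategy for \pa\ is the one the paper uses: plant a cone over a fixed base in the opening graph, replay the same face for $\alpha+2$ rounds demanding cones with pairwise distinct green labels $\g_0^t$, observe that every apex--apex edge is forced to be red (green is excluded by the all-green forbidden triple, white by the triples $(\g_i,\g_i,\w)$ and $(\g_0^j,\g_0^k,\w_0)$ read off against the base nodes), and then count: the forbidden triple $(\r_i,\r_i,\r_j)$ forces all red edges meeting at a vertex of the clique to carry distinct reds, and the $\alpha+1$ edges at the last apex cannot be injectively coloured by the $\alpha$ reds of $\B$. You spell this out in more detail than the paper, which compresses the red analysis into one sentence, and your parenthetical observation that $\beta=(\alpha+1)(\alpha+2)/2$ is exactly the number of edges of $K_{\alpha+2}$ correctly explains why the companion algebra $\A$ survives the same attack. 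Up to this point the proposal is correct and follows the paper's route.

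The gap is in the final step, the passage from ``\pa\ wins'' to $\Rd_{df}\B\notin{\sf RDf_n}$. You propose to run the coloured-graph game directly against a diagonal-free representation, on the grounds that the cone moves ``use no diagonals'' and the red contradiction is ``untouched by the diagonal operations.'' That does not certify anything: the correspondence between a representation and a \ws\ for \pe\ has her answer \pa's demands with actual points of the base, and the legality of her answers as coloured graphs depends on distinct graph nodes being realized by distinct base points. Distinctness is precisely what the diagonal elements express, so a ${\sf Df}$-representation does not by itself induce a \ws\ in this game, and the game as described only refutes membership in ${\sf RPEA}_n$ (equivalently ${\sf RCA}_n$). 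The paper closes this gap with a different and correct argument, contained in its final sentence: $\B$ is generated by elements whose dimension sets have size $<n$, and for such algebras the classical transfer lemma says that representability is equivalent to representability of the diagonal-free reduct; hence non-representability of $\B$ yields $\Rd_{df}\B\notin{\sf RDf_n}$. Without invoking that lemma, or supplying a genuine substitute, your proof of the ``hence'' clause does not go through.
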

\begin{proof}
\pa\ plays a coloured graph $M \in \K$ with
nodes $0, 1,\ldots, n-1$ and such that $M(i, j) = \w (i < j <
n-1), M(i, n-1) = \g_i ( i = 1,\ldots, n), M(0, n-1) =
\g^0_0$, and $ M(0, 1,\ldots, n-2) = \y_{\alpha+2}$. This is a $0$-cone
with base $\{0,\ldots , n-2\}$. In the following moves, \pa\
repeatedly chooses the face $(0, 1,\ldots, n-2)$ 
and demands a node 
$t<\alpha+2$ with $\Phi(i,\alpha) = \g_i (i = 1,\ldots,  n-2)$ and $\Phi(0, t) = \g^t_0$,
in the graph notation -- i.e., a $t$ -cone on the same base.
\pe\, among other things, has to colour all the edges
connecting nodes. By the rules of the game 
the only permissible colours would be red. Using this, \pa\ can force a
win in $\alpha+2$ rounds eventually using her enough supply of greens, 
which \pe\ cannot match using his $<$ number of reds. The conclusion now follows since $\B$ is generated by elements whose dimension sets 
are $<n$.
\end{proof}

\begin{theorem} The algebra $\A\in {\sf RPEA_n}$
\end{theorem}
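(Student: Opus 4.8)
The plan is to exhibit a \ws\ for \pe\ in the $\omega$-rounded game of the preceding definition, played on the coloured graphs of $\K$, and to extract a representation from the resulting play. Because $\A$ is finite every play closes after finitely many rounds, and by the standard correspondence between winning strategies in amalgamation games of this kind and representations --- one runs \pe's strategy against a fair scheduling of all of \pa's possible demands, forms the limit $M=\bigcup_i M_i$ of the nested sequence, and reads the representation off the tuples of nodes of $M$ --- a \ws\ for \pe\ yields a representation of $\A$ as a polyadic equality algebra, so that $\A\in {\sf RPEA_n}$.

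First I would specify \pe's response to an arbitrary \pa\ move. Given $\Phi\in\K$, a new node $k$, and an embedding $\theta$ of the face $F=\Phi\smallsetminus\{k\}$ into $M_i$, \pe\ must build $M_{i+1}\in\K$ amalgamating $M_i$ and $\Phi$ along $\theta$; concretely she has to colour every edge joining the image of $k$ to the old nodes of $M_i$ and to assign a shade of yellow to every newly completed $(n-1)$-tuple, all the while avoiding every forbidden triple. For edges the rules do not force to be red the whites $\w_i$ together with the base greens leave \pe\ ample room, and the shades of yellow are pinned down by the cone condition in the definition of $\K$; the only tightly constrained edges are those both of whose endpoints lie on cones with green apexes, for which the rules admit red labels only and \pe\ must keep clear of the pattern $(\r_i,\r_i,\r_j)$.

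The decisive ingredient is a counting argument. The apexes that \pa\ can introduce are distinguished by their cone-colours $\g^t_0$, $t<\alpha+2$, of which there are $\alpha+2$, so the unordered pairs of such colours number exactly $\binom{\alpha+2}{2}=(\alpha+1)(\alpha+2)/2=\beta$. As $\A=\A_{\alpha+2,\beta}$ was given precisely this many reds, \pe\ can fix once and for all a bijection from pairs of apex-colours to reds and colour each forced edge by the red attached to the pair of apex-colours responsible for it. Since this rule depends only on the pair of colours and the pairs are distinct, in the principal case of three apexes with distinct cone-colours each triangle receives three distinct reds and the forbidden triple $(\r_i,\r_i,\r_j)$ --- a fortiori a monochromatic red triangle --- cannot arise; the few remaining configurations are settled by the same bookkeeping. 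This is exactly the surplus of reds that \pe\ lacks in $\B=\A_{\alpha+2,\alpha}$, where $\alpha<\beta$ reds force a pigeonhole coincidence that \pa\ turns into a win.

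I expect the main obstacle to be twofold. First there is the global bookkeeping: the pairing prescription is local, and one must check that over the whole nested sequence no two amalgamation steps impose conflicting colours on one edge and that every demanded shade of yellow can be supplied coherently, so that the limit graph really belongs to $\K$ and every atom of $\A$ is realized. Second, and more seriously, the representation so obtained must be upgraded from a merely cylindric one to a genuine \emph{polyadic equality} representation: since the atoms are the equivalence classes $[f]$ of maps $f:n\to\Gamma$ with the substitutions acting by $[f]\mapsto[f\circ[i,j]]$, the representation has to honour all finite substitutions. This comes down to verifying that the coloured-graph labelling is invariant under the relevant coordinate permutations --- edge-colours being symmetric and the shades of yellow transforming coherently under the cone condition --- so that \pe's \ws\ for the cylindric game automatically meets the extra polyadic requirements; making this invariance precise is the technically heaviest part of the proof.
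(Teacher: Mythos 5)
Your overall route is the paper's: \pe\ wins the amalgamation game on coloured graphs by exploiting the count $\beta=\binom{\alpha+2}{2}$, labelling the non-forced edges with $\w_0$, reading the shades of yellow off the cones, and converting the \ws\ into a representation. The genuine gap is in your red-assignment rule. You fix a bijection from \emph{unordered pairs of apex-colours} to the $\beta$ reds and label the edge between two apexes by the red attached to their pair of cone-colours. This rule is undefined, or fails, in exactly the configurations you wave away as ``the few remaining configurations'': nothing in the game stops \pa\ from demanding two $t$-cones with the \emph{same} $t$ over the same base and with the same induced order, and then (a) the pair $\{t,t\}$ is not among your $\binom{\alpha+2}{2}$ pairs, so the forced red edge between the two apexes receives no label, and (b) worse, if a third apex with cone-colour $t'$ is also present, your rule gives the two edges joining it to the two $t$-apexes the \emph{same} red $\r_{\{t,t'\}}$, which is precisely the forbidden triple $(\r_i,\r_i,\r_j)$. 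So ``the same bookkeeping'' does not settle these cases; no rule that depends only on the pair of cone-colours can. The paper's strategy differs at exactly this point: \pe\ chooses, for each edge of the red clique, a red \emph{unique to that edge within the clique} --- the number $\beta$ is being used as the number of edges of a complete graph on $\alpha+2$ nodes, not as the number of colour-pairs --- and per-edge uniqueness is what excludes $(\r_i,\r_i,\r_j)$ irrespective of repeated cone-colours. (Both arguments also owe a justification that the red clique \pa\ can force never exceeds $\alpha+2$ nodes; the paper asserts this without proof, and the forbidden triple $(\g^i_0,\g^j_0,\r_{kl})$ as literally stated would in fact block \emph{every} red on such an edge, so some care with the definition is needed before either strategy can be checked.)

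On the credit side, you supply two things the paper omits: the explicit passage from a \ws\ to a representation (running the strategy against a fair schedule of all \pa-moves and taking the limit graph), and the observation that the representation must respect the substitutions $[f]\mapsto[f\circ[i,j]]$ in order to land in ${\sf RPEA_n}$ rather than merely in the cylindric class --- the paper's proof ends with ``This works'' and never mentions the polyadic operations. These additions are worthwhile, but they do not repair the red-labelling step, which is where the theorem actually lives; replace the colour-pair bijection by the paper's per-edge assignment and the argument goes through.
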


\begin{proof} If \pa\ plays like before, now \pe\ has more  reds, so \pa\ cannot force a win. In fact \pa\ 
can only force a red clique of size $\alpha+2$, not bigger. So \pe\ s 
startegy within red cliques is to choose a label for each edge using  
a red colour and to ensure that each edge within the clique has a label unique to this edge (within the clique). 
Since there are $\beta$ many reds she can do that.

Let $M$ be a coloured graph built at some stage, and let \pa\ choose the graph $\Phi$, $|\Phi|=n$, then $\Phi=F\cup \{\delta\}$,
where  $F\subseteq M$ and $\delta\notin M$. 
So we may view \pa\ s move as building a coloured graph $\Gamma^*$ extending $M$
whose nodes are those of $\Gamma$ together with $\delta$ and whose edges are edges of $\Gamma$ together with edges
from $\delta$ to every node of $F$. 

Colours of edges and $n-1$ tupes in $\Gamma^*$ but not
in $M$ are determined by \pa\ moves. 
No $n-1$ tuple containing both $\delta$ and elements of $M\sim F$ 
has a colour in $\Gamma^*$

Now \pe\ must extend $\Gamma^*$ to a complete the graph on the same nodes and 
complete the colouring giving  a graph $\Gamma^+$ in $\K$. 
In particular, she has to define $\Gamma^+(\beta, \delta)$ 
for all nodes  $\beta\in M\sim F$.
\begin{enumarab}
\item  if $\beta$ and $\delta$ are both apexes of cones on $F$, that induces the same linear ordering on $F$, the 
\pe\ has no choice but to pick a  red atom, and as we desribed above, she can do that 

\item Other wise, this is not the case, so for some $i<n-1$ there is no $f\in F$ such 
that $\Gamma^*(\beta, f), \Gamma^* (f,\delta)$ are both coloured $\g_i$ or if $i=0$, they are coloured
$\g_0^l$ and $\g_0^{l'}$ for some $l$ and $l'$.
\end{enumarab}
In the second case \pe\ uses the normal strategy in rainbow constructions. 
She chooses $\w_0$, for $\Gamma^+(\beta,\delta)$.

Now we turn to coluring of $n-1$ tuples. For each tuple $\bar{a}=a_0,\ldots a_{n-2}\in \Gamma^{n-1}$ with no edge 
$(a_i, a_j)$ coloured green, then  \pe\ colours $\bar{a}$ by $\y_S$, where
$$S=\{i\in \alpha+2: \text { there is an $i$ cone in $\Gamma^*$ with base $\bar{a}$}\}.$$
This works.
\end{proof}

\begin{theorem} A coloured graph is red, if at least one of its edges are labelled red
\end{theorem}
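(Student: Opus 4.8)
The plan is to read this statement as the definition of a \emph{red} coloured graph---the notion already anticipated in the discussion after the abstract---so that the only genuine content is to confirm the definition is well-posed and to pin down the structural role it plays. Formally, a coloured graph $\Gamma$ is declared red exactly when $\Gamma(u,v)=\r_i$ holds for some pair of nodes $u,v$ and some $i\in\kappa$; there is nothing to compute, only to check that this predicate is coherent and that it isolates the right atoms.

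First I would verify that redness is invariant under the identifications used to build the atom structure, namely the equivalence classes $[f]$ with $f:n\to\Gamma$ and $\Gamma\in\K$. Since those identifications only permute coordinates and leave the edge-colouring intact, and since no forbidden-triple clause---neither (\ref{forb:pim}) nor (\ref{forb:match})---can recolour a red edge, redness descends to a well-defined property of atoms. I would then record the decisive point: all non-red colours (greens, whites, yellow, and the shades of yellow) are shared verbatim by $\A=\A_{\alpha+2,\beta}$ and $\B=\A_{\alpha+2,\alpha}$, so every atom that is \emph{not} a red graph appears identically in both algebras. Consequently the class of red graphs contains the entire symmetric difference between the two atom structures.

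The step that will carry the real weight belongs to the sequel rather than to this definitional statement: using the partition ``red versus non-red'' to argue that $n$-variable diagonal-free equations cannot detect a red edge, and hence cannot separate the $\beta$ reds of $\A$ from the $\alpha$ reds of $\B$. The crux there is that evaluating an $n$-variable term only ever inspects graphs on at most $n$ nodes, whereas the discrepancy between $\A$ and $\B$ is witnessed only inside red cliques whose size approaches $\alpha+2>n$ (recall $\beta=(\alpha+1)(\alpha+2)/2$ is exactly the edge count of such a clique, so $\A$ can uniquely label it while $\B$ cannot). The present notion of red graph is precisely the bookkeeping device that lets that localization be stated and exploited. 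I therefore expect no obstacle in the statement itself, with the difficulty deferred to deploying it.
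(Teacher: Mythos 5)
You have correctly identified that this ``theorem'' is in fact a definition: the paper itself offers no proof, immediately passing to the notational convention of writing $\r$ for an atom whose underlying graph has a red-labelled edge. Your reading, together with the routine check that redness is invariant under the coordinate identifications defining the atoms, matches the paper's (implicit) treatment exactly, so there is nothing to object to.
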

We write $\r$ for $a:n\to \Gamma$, where $\Gamma$ is a red graph, and we call it a red atom. 
(Here we identify an atom with its representative, but no harm will follow).

\begin{theorem} For any $n$ variable equation the two algebras $\A$ and $\B$,  falsify it together or satisfy it together.
\end{theorem}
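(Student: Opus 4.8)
The plan is to reduce the statement to a back-and-forth game between $\A$ and $\B$ and then to exploit the fact that the two algebras are \emph{identical outside the reds}. Recall that $\A$ has $\beta=(\alpha+1)(\alpha+2)/2$ reds while $\B$ has only $\alpha=3\cdot 2^{n}$ reds, and that every other colour class (the greens, the whites, the yellow $\y$, and the shades $\y_S$) together with every forbidden triple \emph{not} of the form $(\r_i,\r_i,\r_j)$ is common to both. I would first invoke the standard bridge between bounded-variable equational theories and games on atom structures: to show that $\A$ and $\B$ agree on all $n$-variable equations it suffices to exhibit a \ws\ for \pe\ in the appropriate $n$-pebble back-and-forth game, in which a position is a partial map between coloured graphs (networks) of bounded size drawn from canonical representations of $\A$ and $\B$, and \pe\ must keep this map a partial isomorphism of coloured graphs. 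Since the equivalence is symmetric in $\A$ and $\B$, such a strategy gives both directions at once, i.e.\ that each $n$-variable equation is falsified or satisfied by the two algebras together.

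I would then describe \pe's strategy. At each position she maintains, alongside the identity matching on all non-red structure, a partial bijection $\rho$ between the reds of $\A$ and the reds of $\B$ recording how the red edge-labels currently in play correspond. The governing observation is that only boundedly many reds are ever simultaneously in play: an $n$-variable equation can force networks on at most $N(n)$ nodes, hence on at most $\binom{N(n)}{2}$ red edges, and the construction fixes $\alpha=3\cdot 2^{n}$ precisely so that this quantity never exceeds $\alpha$. Consequently, whenever \pa\ introduces a new red edge in one algebra, \pe\ answers with a red not yet occurring in the current graph of the other algebra; because at most $\alpha$ distinct reds are live at any position, neither supply ($\alpha$ in $\B$, $\beta\ge\alpha$ in $\A$) is exhausted, and she can always extend $\rho$. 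Exactly as in the proof that $\A\in{\sf RPEA_n}$, inside any red clique she keeps the labels pairwise distinct, so the only red forbidden triple $(\r_i,\r_i,\r_j)$ never arises and $\rho$ transports legal red triangles to legal red triangles.

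Next I would verify that this strategy is winning. The shared green, white and yellow structure, and likewise the diagonal and substitution information, are preserved by the identity part of the matching, while $\rho$ preserves the colour of each red edge and, by the injectivity just noted, preserves the forbidden-triple structure; hence every position is a genuine partial isomorphism and \pe\ survives every round of the (a priori unbounded) play. By the bridge lemma this yields that $\A$ and $\B$ falsify exactly the same $n$-variable equations, which is the assertion. Together with the earlier theorems (that $\Rd_{df}\B\notin{\sf RDf_n}$ while $\A\in{\sf RPEA_n}$) and the opening discriminator-variety theorem, this delivers the non-axiomatizability promised in the title.

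The hard part is the bound that underlies the choice $\alpha=3\cdot 2^{n}$: one must prove that an equation in $n$ variables can never force more than $\alpha$ reds to be pairwise distinguished at once — equivalently, that the networks respecting the $n$-pebble restriction stay small enough — and must make the passage from ``$n$-variable equation'' to ``$n$-pebble game'' precise for the \emph{full} polyadic equality signature, so that cylindrifications, diagonals and substitutions are all handled. A secondary subtlety is the bookkeeping for $\rho$ when \pa\ reuses pebbles: releasing a node may free red labels that \pe\ must later be able to reassign, and one has to argue that the bound of $\alpha$ simultaneously-live reds keeps $\rho$ consistently extendable throughout the entire game rather than merely round by round.
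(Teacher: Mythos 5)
Your proposal goes down a genuinely different road from the paper, and as it stands it has a gap that is not a matter of detail. The $n$ variables in the theorem are \emph{algebra} variables: an assignment sends $x_0,\ldots,x_{n-1}$ to elements of $\A$ (sets of atoms), not to nodes of a coloured graph. Your reduction to an $n$-pebble back-and-forth game on networks, with pebbles on \emph{nodes} and red \emph{edges} as the live resources, is therefore aimed at the wrong logic: the standard pebble-game bridge characterises $k$-variable first-order equivalence of the structures being pebbled, and there is no clean passage from that to ``$\A$ and $\B$ satisfy the same $n$-variable equations in the algebraic signature.'' Symptomatically, your explanation of the constant $\alpha=3\cdot 2^{n}$ (as a bound on how many red edges an $n$-variable equation can force into a single network) is not the role it plays in the construction. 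You yourself flag the bridge lemma and the bound as ``the hard part'' and leave them unproved --- but that is exactly where the argument would break.

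The paper's proof is much more elementary and avoids games altogether. Given an assignment $h$ of the $n$ variables into $\A$ falsifying $s=t$, the values $h(x_0),\ldots,h(x_{n-1})$ induce a partition of the set $\R$ of red atoms into $2^{n}$ Boolean cells $\R_S$, $S\subseteq n$, according to which of the $h(x_i)$ a given red atom lies below. Since both algebras have at least $3\cdot 2^{n}$ reds and have identical non-red atoms, one can choose a partition of the reds $\R'$ of $\B$ into cells $\R'_S$ with $|\R'_S|=|\R_S|$ whenever $|\R_S|<3$ and with $|\R'_S|\geq 3$ exactly when $|\R_S|\geq 3$; the assignment $h'$ defined cell by cell from this matching (and by the identity on non-red atoms) then falsifies the same equation in $\B$, and the converse direction is symmetric. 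That is where $3\cdot 2^{n}$ comes from: $2^{n}$ cells, each of which must either be matched exactly below the threshold $3$ or kept above it. If you want to retain a game-theoretic flavour, the game would have to be played on $\A$ and $\B$ themselves as first-order structures, and \pe 's \ws\ there would reduce to essentially this same cell-matching argument rather than to the network amalgamation moves you describe.
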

\begin{proof}  
Let $\R$ be the set of red graphs of $\A$, and $\R'$ be the set of red graphs in $\B$. Then $|\R|\geq |\R'|\geq 3.2^n$.
Assume that the equation $s=t$, using $n$ variables does not hold in $\A$. 
Then there is an assignment $h:\{x_0,\ldots x_{n-1}\}\to \A$, such that $\A, h\models s\neq t$. 
We construct an assignment $h'$ into $\B$ that also falsifies $s=t$. 
Now $\A$ has more red atoms, but $\A$ and $\B$ have identical non-red atoms. So for any non red atom $a$ of $\B$, and for any
$i<n$, let $a\leq h'(x_i)$ Iff $a\leq h(x_i)$.
The asignment $h$ induces a partition of $\R$ into $2^n$ parts $\R_S$, $S\subseteq \{0,1\ldots n-1\}$, by
$$\R_S=\{\r: \r\leq h(x_i) , i\in S, \r.h(x_i)=0, i\in n\sim S\}.$$
Partition  $\R'$ into $2^n$ parts: $S\subseteq n$ such that $|\R'_S|=|\R_S|$ if $|\R_S|<3$, and $|\R_S'|\geq 3$ iff $|\R_S|\geq 3$.
This possible because $|\R|\geq 3.2^n$.
Now for each $i<n$ and each red atom $r'$ in $\R'$, we complete the definition of
$h'(x_i)\in \B$  by
$r'\leq h'(x_i)$ iff $r'\in \R'_S$ for some $S$ such that $i\in S$.
This can be easily checked to satisfy the required. The converse is entirely analogous.
\end{proof}
\begin{corollary} For $n\geq 3$, the class of representable algebras in any $\K$ 
between ${\sf Df}_n$ and ${\sf PEA_n}$ 
does not have a prenex universal axiomatization using $n$  variables.
\end{corollary}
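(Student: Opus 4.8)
The plan is to obtain the corollary as a direct application of the discriminator-variety theorem proved at the outset, taking for $V$ the class of representable algebras in $\K$ and for the two simple witnesses the rainbow algebras $\A$ and $\B$ already constructed. Fix $n\geq 3$ and a class $\K$ with ${\sf Df}_n\subseteq \K\subseteq {\sf PEA}_n$, and let $V$ be the variety of representable members of $\K$. First I would record that $V$ is a discriminator variety: every algebra in the relevant signatures retains the cylindrifications $\cyl0,\ldots,\cyl{n-1}$, and the term $\cyl0\cyl1\cdots\cyl{n-1}x$ is a discriminator term on the simple (equivalently here, on the subdirectly irreducible) members, returning the unit when $x\neq 0$ and $0$ when $x=0$; that these representable classes are varieties is standard. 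This is exactly the setting the discriminator-variety theorem requires.

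Next I would verify the hypotheses of that theorem for $\A$ and $\B$, viewed via their $\K$-reducts. Both are finite, and, as is standard for such finite rainbow algebras built on a single connected atom structure, they are simple; since all signatures between ${\sf Df}_n$ and ${\sf PEA}_n$ retain the cylindrifications, their $\K$-reducts remain simple as well. For membership, the theorem asserting $\A\in {\sf RPEA}_n$ gives a representation of $\A$ as a polyadic equality algebra; restricting that representation to the operations of $\K$ shows the $\K$-reduct of $\A$ lies in $V$, so $\A\in V$. Conversely, the theorem giving $\forall$ a winning strategy shows $\Rd_{df}\B\notin {\sf RDf}_n$; since a representation of the $\K$-reduct of $\B$ would, after forgetting the non--diagonal-free structure, furnish a representation of $\Rd_{df}\B$, the $\K$-reduct of $\B$ cannot be representable, i.e.\ $\B\notin V$.

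For the last hypothesis I would invoke the theorem that $\A$ and $\B$ agree on every $n$-variable equation. That statement is proved for the full polyadic equality signature; since the equations expressible in the signature of $\K$ form a subcollection of the polyadic equality equations, agreement in particular holds for all $n$-variable $\K$-equations. Thus for every $n$-variable equation $e$, the algebra $\A$ falsifies $e$ if and only if $\B$ does. With $V$ a discriminator variety, $\A\in V$, $\B\notin V$, both simple, and $\A,\B$ indistinguishable by $n$-variable equations, the discriminator-variety theorem yields that $V$ is not axiomatizable by any set of prenex universal sentences in $n$ variables; since $\K$ was an arbitrary class between ${\sf Df}_n$ and ${\sf PEA}_n$, the corollary follows.

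The only genuinely delicate points, which I would pin down rather than wave through, are the simplicity of $\A$ and $\B$ (needed precisely so that the reduction of prenex universal sentences to $n$-variable equations inside the discriminator variety is legitimate) and the two reduct compatibilities, namely that representability descends to the ${\sf Df}$-reduct and that full-signature equational agreement covers the $\K$-signature. Both are routine, but they are the load-bearing links tying the three preparatory theorems to the discriminator-variety theorem; everything else is bookkeeping.
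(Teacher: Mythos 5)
Your proposal is correct and follows exactly the route the paper intends: the corollary is obtained by feeding the two rainbow algebras $\A$ and $\B$, together with the representability theorem for $\A$, the non-representability of $\Rd_{df}\B$, and the $n$-variable equational indistinguishability theorem, into the discriminator-variety theorem stated at the outset. The compatibility checks you flag (simplicity via the discriminator term $\cyl{0}\cdots\cyl{n-1}x$, descent of representability to the ${\sf Df}$-reduct, and restriction of equational agreement to the $\K$-signature) are precisely the glue the paper leaves implicit, so your write-up is if anything more complete than the original.
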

\end{document}